\documentclass{amsart}
\usepackage[top=25mm, bottom=25mm, left=25mm, right=25mm]{geometry}
\usepackage{graphicx}
\usepackage{amsmath}
\allowdisplaybreaks[4]
\usepackage{amssymb}
\usepackage{amsthm}
\usepackage{amsfonts}
\usepackage{bm}
\usepackage{mathtools}
\usepackage{empheq}
\usepackage{tikz}

\theoremstyle{definition}

\newtheorem{definition}{Definition}[section]

\newtheorem{theorem}[definition]{Theorem}
\newtheorem{corollary}[definition]{Corollary}
\newtheorem{remark}[definition]{Remark}

\numberwithin{equation}{section}

\newcommand{\mQ}{\mathbb{Q}}
\newcommand{\mZ}{\mathbb{Z}}
\newcommand{\mF}{\mathbb{F}}
\newcommand{\mFp}{\mathbb{F}_{p}}
\newcommand{\wi}{\widetilde}
\newcommand{\ov}{\overline}

\title{Torsion Packet Envelope and Rational Points of Algebraic Curves}
\author{Ryo Ichikawa}
\date{}

\keywords{Rational points; Algebraic curves; Hyperelliptic curve}
\subjclass[2010]{11G30}
\address{Ryo Ichikawa \\ 
Mathematical Inst. Tohoku Univ.\\
 6-3,Aoba, Aramaki, Aoba-Ku, Sendai 980-8578, JAPAN}
\email{ichikawa.ryo.t8@dc.tohoku.ac.jp}

\begin{document}

\begin{abstract}\noindent
  In this paper, we give an elementary new method for determining the rational points on algebraic curves using torsion packets. We also provide examples of curves for which all rational points can be completely determined by our method.
\end{abstract}
\maketitle
\tableofcontents

\section{Introduction}
Determining all rational points on a given algebraic curve is a fundamental problem in number theory. Faltings proved in \cite{Faltings1983} that a smooth algebraic curve over a number field $K$ of genus at least $2$ has only finitely many $K$-rational points. However, his proof is ineffective in the sense that it does not yield an explicit upper bound on the number of rational points on a given curve. To determine all rational points on a specific algebraic curve, it is often necessary not only to search for rational solutions of the defining equation, but also to establish explicit bounds restricting the possible rational points.
In this direction, the works \cite{Coleman1985}, \cite{Stoll2006}, and \cite{KZ2013} establish explicit upper bounds via the Coleman integral. These methods, however, provide only bounds on the number of rational points and do not, in general, determine them completely.
In the present paper, we generalize the method of \cite{T-Y2008} and introduce a new approach that avoids the use of Coleman integrals. More precisely, we construct a torsion packet containing the set of rational points. As an application, we present examples of algebraic curves for which all rational points can be explicitly determined by this method.

We fix some notation to state our results. Let $C$ be a smooth geometrically connected projective curve of genus $g\ge 2$ over $\mQ$. Let $J$ denote the Jacobian of $C$. We assume that $C(\mQ)\neq \emptyset$. Fix a point $P_{0}\in C(\mQ)$. We define a morphism
\begin{equation*}
    \iota : C(\ov{\mQ})\rightarrow J(\ov{\mQ}),P\mapsto P-P_{0}.
\end{equation*}
Note that this morphism is defined over $\mQ$, and it is injective since $g\ge 1$. We consider triples $(F,T,w)$ that satisfy the following conditions:

\begin{enumerate}
    \item $F$ is a finite extension of $\mQ$;\label{eq:TPEa}
    \item $w$ is a finite place of $F$ lying over a prime $p$ such that odd and completely split over $F$;\label{eq:TPEb}
    \item $C$ has a good reduction at $p$;\label{eq:TPEc}
    \item $T$ is a finite subset of $C(F)$ with $\iota(T)\subseteq J(F)_{\mathrm{tor}}$;\label{eq:TPEd}
    \item $\#T \ge \#\wi{C}(\mF_{w})$.\label{eq:TPEe}
\end{enumerate}

where $J(F)_{\mathrm{tor}}$ denotes the subgroup of torsion points of $J(F)$. 

\begin{definition}
    The notation being as before. The triple $(F,T,w)$ that satisfies \eqref{eq:TPEa}-\eqref{eq:TPEe} for $C$ is said to be a $\textit{torsion\ packet\ envelope}$ for $C(\mQ)$. We sometimes call it a $\textit{TPE}$.
\end{definition}

Then, our result is the following.

\begin{theorem}\label{MainThm}
    The notation being as before. Assume that there exists a torsion packet envelope $(F,T,w)$ for $C(\mQ)$. Then,  
    $C(\mQ)\cap J(\mQ)_{\mathrm{tor}}\coloneqq \iota^{-1}(\iota(C(\mQ))\cap J(\mQ)_{\mathrm{tor}})\subseteq T$.
\end{theorem}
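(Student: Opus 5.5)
The plan is to use the injectivity of reduction on torsion of prime-to-$p$ order, together with the bound $\#T\ge \#\wi{C}(\mF_w)$, to show that reduction mod $w$ restricted to $T$ is a bijection onto $\wi{C}(\mF_w)$. Any rational torsion point then reduces to the same point as some element of $T$, and injectivity forces equality.

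\textbf{Step 1 (setup).} Since $p$ splits completely in $F$, the residue field is $\mF_w=\mF_p$, and $F_w\cong \mQ_p$ is unramified over $\mQ_p$ with $e=1<p-1$ because $p$ is odd. Good reduction of $C$ at $p$ gives good reduction of $J$ at $p$, with N\'eron model $\mathcal{J}$ over $\mZ_p$. We then have reduction maps $r_C:C(F)\to \wi{C}(\mF_w)$ and $r_J:J(F)\to \wi{J}(\mF_w)$, and these are compatible with $\iota$, since $P_0\in C(\mQ)$ reduces to $\wi{P_0}$ and $\wi{\iota}(Q)=Q-\wi{P_0}$.

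\textbf{Step 2 (injectivity on torsion).} I will show that $r_J$ is injective on $J(F)_{\mathrm{tor}}$, viewing $J(F)\subseteq J(F_w)=J(\mQ_p)$. The kernel of reduction is the formal group $\hat{J}(p\mZ_p)$. Because $e=1<p-1$, the standard formal-group logarithm argument shows this kernel is torsion-free; this is where oddness of $p$ is used. Hence $r_J|_{J(F)_{\mathrm{tor}}}$ is injective. Since $\wi{\iota}$ is injective ($g\ge1$), the map $r_C$ is injective on $\iota^{-1}(J(F)_{\mathrm{tor}})$, which contains both $T$ and $C(\mQ)\cap J(\mQ)_{\mathrm{tor}}$.

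\textbf{Step 3 (counting and conclusion).} By Step 2, $r_C|_T$ is injective into $\wi{C}(\mF_w)$. The hypothesis $\#T\ge\#\wi{C}(\mF_w)$ therefore makes it a bijection. Now let $P\in C(\mQ)$ with $\iota(P)$ torsion. Choose $Q\in T$ with $r_C(Q)=r_C(P)$. Then $\iota(P)-\iota(Q)\in J(F)_{\mathrm{tor}}$ lies in the kernel of $r_J$, so it is zero by Step 2. Injectivity of $\iota$ gives $P=Q\in T$.

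The main obstacle is Step 2: it requires the torsion-freeness of the formal group of an abelian variety over an unramified extension with $p$ odd, which I would cite as a standard fact (e.g.\ Katz's appendix or Serre's Lie groups), together with the identification $F_w=\mQ_p$.
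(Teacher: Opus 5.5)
Your proposal is correct and follows the paper's proof: show reduction is injective on $J(F)_{\mathrm{tor}}$, deduce that $T\to\wi{C}(\mF_w)$ is a bijection by counting, then match a rational torsion point with the element of $T$ that has the same reduction. The only difference is cosmetic: the paper handles the prime-to-$p$ torsion (via Hindry--Silverman C.1.4) and the $p$-power torsion (via Katz's lemma with $e=1<p-1$, or Raynaud) separately, while you get both at once from torsion-freeness of $\hat{J}(p\mZ_p)$; also, in Step 2 the injectivity of $r_C$ on $\iota^{-1}(J(F)_{\mathrm{tor}})$ comes from $r_J\circ\iota=\wi{\iota}\circ r_C$ and the injectivity of $\iota$ (not of $\wi{\iota}$), exactly as you argue in Step 3.
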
 
\begin{remark}The proof of Theorem~\ref{MainThm} shows that 
$\#T = \#\wi{C}(\mF_{w})$.
\end{remark}

In particular, we obtain the following result.

\begin{corollary}
    If $\iota(C(\mQ))\subseteq J(\mQ)_{\mathrm{tor}}$, then $C(\mQ)\subseteq T$. In particular, if $\operatorname{rank}J(\mQ)=0$, then $C(\mQ)\subseteq T$.
\end{corollary}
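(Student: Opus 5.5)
The corollary is to be deduced directly from Theorem~\ref{MainThm}, so the plan is simply to check that the hypotheses of the corollary force $C(\mQ)\cap J(\mQ)_{\mathrm{tor}}$ to be all of $C(\mQ)$. Throughout, a torsion packet envelope $(F,T,w)$ for $C(\mQ)$ is assumed to exist, as in the theorem. Everything rests on the theorem itself: the real content is its proof, namely the injectivity of reduction on torsion prime to $p$ (together with $p$ odd and unramified, so that the $p$-part also injects) and the counting $\#T\ge\#\wi{C}(\mF_w)$. Granting that result, there is no serious obstacle, only bookkeeping with the definition of $C(\mQ)\cap J(\mQ)_{\mathrm{tor}}$ as $\iota^{-1}(\iota(C(\mQ))\cap J(\mQ)_{\mathrm{tor}})$.

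\emph{First statement.} Suppose $\iota(C(\mQ))\subseteq J(\mQ)_{\mathrm{tor}}$. Then $\iota(C(\mQ))\cap J(\mQ)_{\mathrm{tor}}=\iota(C(\mQ))$. Since $\iota$ is injective, we get $\iota^{-1}(\iota(C(\mQ)))=C(\mQ)$. Here the preimage is taken inside $C(\ov{\mQ})$, and injectivity on $C(\ov{\mQ})$ prevents extra points from appearing. Hence $C(\mQ)\cap J(\mQ)_{\mathrm{tor}}=C(\mQ)$, and Theorem~\ref{MainThm} gives $C(\mQ)\subseteq T$.

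\emph{Second statement.} Suppose $\operatorname{rank}J(\mQ)=0$. By the Mordell--Weil theorem, $J(\mQ)$ is finitely generated. Having rank $0$, it is finite, so $J(\mQ)=J(\mQ)_{\mathrm{tor}}$. Because $P_0\in C(\mQ)$ and $\iota$ is defined over $\mQ$, each $P\in C(\mQ)$ gives $\iota(P)=P-P_0\in J(\mQ)$, a rational divisor class. Therefore $\iota(C(\mQ))\subseteq J(\mQ)_{\mathrm{tor}}$, and the first statement applies.
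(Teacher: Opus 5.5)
Your proposal is correct and follows the paper's route exactly: the paper states the corollary as an immediate consequence of Theorem~\ref{MainThm}, because the hypothesis together with injectivity of $\iota$ gives $C(\mQ)\cap J(\mQ)_{\mathrm{tor}}=C(\mQ)$, and rank $0$ forces $J(\mQ)=J(\mQ)_{\mathrm{tor}}$. Your appeal to Mordell--Weil is harmless but not needed, since rank $0$ already means $J(\mQ)\otimes_{\mZ}\mQ=0$, i.e.\ every element of $J(\mQ)$ is torsion.
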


The notation being as before. As an application, we apply our method to the following families of curves and explicitly determine their rational points.

\begin{corollary}\label{compute}
\leavevmode\par
    \begin{enumerate}
        \item Let $C_{d}$ be the hyperelliptic curve $y^2=x^5+d$ where $d\in \mZ \setminus \{0\}$ is tenth-power free and $d\equiv 1,7,9 \pmod{11}$. If $\operatorname{rank}J(\mQ)=0$, then
        \begin{equation*}
            C_{d}(\mQ)=
            \begin{cases}
                \{ P_{\infty} \} & \text{if $\ov{d}=\ov{7}$ }, \\
                \{ (0,\pm\sqrt{d}),P_{\infty} \} & \text{if $\ov{d}=\ov{9}$ and $d$ is a perfect square}, \\
                \{ P_{\infty} \} & \text{if $\ov{d}=\ov{9}$ and $d$ is not a perfect square}, \\
                \{ (0,\pm\sqrt{d}),P_{\infty} \} & \text{if $\ov{d}=\ov{1}$, $d$ is a perfect square and is not a fifth power}, \\
                \{(-\sqrt[5]{d},0),P_{\infty}\} & \text{if $\ov{d}=\ov{1}$, $d$ is not a perfect square and is a fifth power}, \\
                \{ P_{\infty} \} & \text{if $\ov{d}=\ov{1}$, $d$ is not a perfect square and is not a fifth power}.
            \end{cases}
        \end{equation*}

        Table~1 lists the values of $d$ with $|d|\le 200$ for which the Jacobian of $C_{d}$ has rank $0$ over $\mQ$, computed using Magma \cite{Magma}.
        
        \item Let $p$ be a prime with $p\equiv 3 \pmod{4}$. Let $D_{d}$ be the hyperelliptic curve $y^2=f(x)\coloneqq x^{p-1}+dx^{\frac{p-1}{2}}-1$ where $d\in p\mZ$. If $\operatorname{rank}J(\mQ)=0$, then
        \begin{equation*}
            D_{d}(\mQ)\subseteq \{ (\alpha,0),P_{\pm\infty}\ \mid\ f(\alpha)=0, \alpha\in\ov{\mQ} \}.
        \end{equation*}
        For example, using Magma \cite{Magma}, when $p=7$ and $0\leq d \leq 100$, the following values of $d$ satisfy $\operatorname{rank}J_{D_{d}}(\mQ)=0$:
        \begin{equation*}
            d\in\{ 0,42,70,98 \}.
        \end{equation*}
        
        \item Let $p$ be a prime with $p\ge 5$ and let $C$ be the hyperelliptic curve $y^2=x^p-x$. If $\operatorname{rank}J(\mQ)=0$, then
        \begin{equation*}
            C(\mQ)= \{ (0,0),(\pm1,0),P_{\infty} \}.
        \end{equation*}
        For example, using Magma \cite{Magma}, $\operatorname{rank}J_{C}(\mQ)=0$ for $p=5,13,17$.
    \end{enumerate}
\end{corollary}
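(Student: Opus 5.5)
The plan is to handle all three families in the same way. For each curve we exhibit an explicit torsion packet envelope $(F,T,w)$. Since $\operatorname{rank}J(\mQ)=0$ forces $\iota(C(\mQ))\subseteq J(\mQ)_{\mathrm{tor}}$, the corollary following Theorem~\ref{MainThm} then gives $C(\mQ)\subseteq T$. It only remains to decide which points of $T$ are actually $\mQ$-rational. In every case we take $P_{0}$ to be a rational point at infinity, and $T$ will consist of Weierstrass points, points at infinity and a few further points. Torsion of $\iota(T)$ will be checked via explicit principal divisors. The count $\#\wi{C}(\mF_{w})$ will be checked by a direct computation in $\mF_p$, using that $p$ splits completely in $F$, so that $\mF_{w}=\mF_p$.

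\textbf{Part (1).} Take $p=11$. Good reduction holds because $11\nmid 10d$. Over $\mF_{11}$ the map $x\mapsto x^{5}$ takes the value $0$ once and each of $\pm1$ five times. Hence the affine points come from $y^{2}=d$ (at $x=0$) and $y^{2}=d\pm1$ (each with five $x$-values). Using that the squares mod $11$ are $\{1,3,4,5,9\}$, this gives
\begin{itemize}
\item for $d\equiv 7$: $\#\wi{C}(\mF_{11})=1$, and we take $T=\{P_\infty\}$ with $F=\mQ$;
\item for $d\equiv 9$: $\#\wi{C}(\mF_{11})=3$, and we take $T=\{P_\infty,(0,\pm\sqrt d)\}$ with $F=\mQ(\sqrt d)$;
\item for $d\equiv 1$: $\#\wi{C}(\mF_{11})=8$, and we take $T=\{P_\infty,(0,\pm\sqrt d),(-\zeta^{i}\sqrt[5]{d},0)\}$, where $\zeta$ runs over fifth roots of unity, with $F=\mQ(\zeta_5,\sqrt d,\sqrt[5]{d})$.
\end{itemize}
For torsion, $(\alpha,0)-P_\infty$ is $2$-torsion. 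Also $\operatorname{div}(y-\sqrt d)=5(0,\sqrt d)-5P_\infty$, so $(0,\sqrt d)-P_\infty$ is $5$-torsion. For complete splitting of $11$ in $F$: $11\equiv1\pmod5$, $d$ is a square mod $11$ in the cases $9$ and $1$, and $d\equiv1$ is a fifth power mod $11$. Moreover $11$ is unramified in $F$ since $11\nmid 10d$. Reading off rationality (is $d$ a square, is $d$ a fifth power) gives the stated table. Note that being both a square and a fifth power would make $d$ a tenth power, which is excluded up to the trivial case.

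\textbf{Part (2).} Take $w\mid p$ and $d\equiv0\pmod p$. Then $\wi f=x^{p-1}-1$, which is separable with unit leading coefficient, so the reduction is good and the genus is $(p-3)/2$; we need $p\ge7$ for $g\ge2$. Counting points: $x=0$ gives $y^{2}=-1$, which has no solutions since $p\equiv3\pmod4$. Each $x\ne0$ gives exactly one point $(x,0)$. Adding $P_{\pm\infty}$ gives $\#\wi{C}(\mF_p)=p+1$. Let $F$ be the splitting field of $f$. Since $f$ splits into distinct linear factors mod $p$, $p$ is unramified and completely split in $F$. Take $T$ to be the $p-1$ roots $(\alpha,0)$ together with $P_{\pm\infty}$, and $P_0=P_\infty$.

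The torsion check for $P_{-\infty}-P_\infty$ is the main point to verify. With $m=(p-1)/2$ we have
\[
(y-x^{m}-d/2)(y+x^{m}+d/2)=-1-d^{2}/4,
\]
a nonzero constant. Hence $y-x^{m}-d/2$ has divisor supported on $\{P_{\pm\infty}\}$, which makes $P_{-\infty}-P_\infty$ torsion. This function is nonconstant, so its divisor is a nonzero multiple of $P_{\infty}-P_{-\infty}$. Finally, $2(\alpha,0)\sim P_\infty+P_{-\infty}$ then gives that each $(\alpha,0)-P_\infty$ is torsion.

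\textbf{Part (3).} Take $w\mid p$. The polynomial $x^{p}-x$ has derivative $px^{p-1}-1\equiv-1\pmod p$, so it is separable mod $p$ and the curve has good reduction. Every $x\in\mF_p$ gives $y=0$, so $\#\wi{C}(\mF_p)=p+1$. Take $F=\mQ(\zeta_{p-1})$, in which $p$ splits completely because $p\equiv1\pmod{p-1}$. Take $T=\{(0,0),(\zeta,0):\zeta^{p-1}=1\}\cup\{P_\infty\}$, all of whose elements are $2$-torsion relative to $P_\infty$. The rational members of $T$ are exactly $(0,0)$, $(\pm1,0)$ and $P_\infty$, which gives the claimed equality.
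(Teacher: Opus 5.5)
Your proposal is correct and follows essentially the paper's proof: the same triples $(F,T,w)$ (with $w\mid 11$ and the same fields for $d\equiv 7,9,1 \pmod{11}$, the splitting field of $f$ in part (2), and $\mQ(\zeta_{p-1})$ in part (3)), the same point counts over $\mF_p$, and Theorem~\ref{MainThm} applied through the rank-zero corollary.

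Your unit $y-x^{m}-d/2$ in part (2), whose divisor is $m(P_{+\infty}-P_{-\infty})$, is a genuine improvement. The paper never verifies the torsion condition there, and its remark that $\iota(P_{\pm\infty})\in J[2]$ is false in general: for $p=7$ the class $P_{+\infty}-P_{-\infty}$ has order exactly $3$. Your side observations are also accurate, namely that $p\ge 7$ is needed so that $g\ge 2$, and that $d=1$ is not covered by the statement's case list.
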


\begin{table}
\begin{center}
\caption{}
\renewcommand{\arraystretch}{1.5}
\begin{tabular}{|c|c|} \hline
    $\ov{d}$ & Values of $d$ with $|d|\leq 200$ such that $\operatorname{rank}J_{C_{d}}(\mQ)=0$ \\ \hline
    $\ov{7}$ & $-191, -180, -169, -158, -147, -136, -125, -114, -103, -92, -70, -59, $ \\ 
    & $-37, -26, -15, -4, 18, 29, 51, 62, 84, 95, 106, 139, 161, 172, 183$ \\ \hline
    $\ov{9}$ & $-200, -189, -178, -167, -145, -134, -123, -90, -68, -57, -46, -35, -13, -2, $ \\
    & $ 31, 42, 53, 64, 75, 86, 97, 119, 130, 141, 152, 163, 174, 185, 196$ \\ \hline
    $\ov{1}$ & $-186, -164, -153, -142, -131, -120, -109, -98, -76, -65, -54, -21, -10, $ \\
    & $ 1, 12, 23, 56, 67, 78, 100, 111, 133, 144, 155, 166, 188$ \\ \hline
\end{tabular}
\end{center}
\end{table}

The proof of Theorem~\ref{MainThm} is inspired by Section~7 of \cite{T-Y2008}. While \cite{Coleman1985} and \cite{Stoll2006} obtain explicit upper bounds using Coleman integration, in the present work we obtain a finite set containing all rational points by a more elementary approach. An apparent advantage of our method is that the statement of Theorem~\ref{MainThm} is stated independently of the rank of the Jacobian over an extension field though we need to check the fourth condition of TPE. In concrete applications, such as Corollary~\ref{compute}, one can combine this result under the assumption that the Jacobian over $\mQ$ has rank $0$. We hope that our result may have potential application to a certain general theory to attach rational points on algebraic curves over number fields.

We organize the paper as follows. In Section~2, we give a proof of our main theorem. In Section~3, we give some examples of curves for which all rational points can be determined explicitly by our main theorem.

\subsection*{Acknowledgements}
The author would like to express his sincere gratitude to his supervisor, Professor Takuya Yamauchi, for valuable advice and constant encouragement. The author would also like to thank Daichi Tanaka for many helpful comments and valuable advice on writing this paper.

\section{Proof of Main Theorem}
In this Section, we give a proof of Theorem~\ref{MainThm}.
\begin{proof}
We consider the following commutative diagram: 
\begin{equation*}
\begin{tikzpicture}[auto]
\node (a) at (2.4, 4.8) {$T$};
\node (b) at (4.8, 4.8) {$J(F)_{\mathrm{tor}}$};
\node (c) at (0, 2.4) {$C(\mQ)\cap J(\mQ)_{\mathrm{tor}}$}; 
\node (d) at (2.4, 2.4) {$C(F)$};
\node (e) at (4.8, 2.4) {$J(F)$};
\node (f) at (0, 0) {$\wi{C}(\mFp)$};
\node (g) at (2.4, 0) {$\wi{C}(\mF_{w})$};
\node (h) at (4.8, 0) {$\wi{J}(\mF_{w})$};
\draw[->] (a) to node {$\iota$} (b);
\draw[->] (c) to node {$\subseteq$} (d);
\draw[->] (d) to node {$\iota$} (e);
\draw[->] (f) to node {$=$} (g);
\draw[->] (g) to node {$\wi{\iota}$} (h);
\draw[->] (a) to node[xshift=-8pt, yshift=5pt, rotate=270] {$\subseteq$} (d);
\draw[->] (b) to node[xshift=-8pt, yshift=5pt, rotate=270] {$\subseteq$} (e);
\draw[->] (c) to node {red} (f);
\draw[->] (d) to node {red} (g);
\draw[->] (e) to node {red} (h);
\draw[->] (c) to[bend left=60] node {$\iota$} (b);
\end{tikzpicture}
\end{equation*}

where the vertical arrows to the bottom line stand for the reduction maps to the good reductions $\wi{C}$ and $\wi{J}$, respectively. We focus on the composition $J(F)_{\mathrm{tor}}\rightarrow J(F)\rightarrow \wi{J}(\mF_{w})$. Let $J(F)_{\mathrm{tor}}^{(p)}$ denote the subgroup of elements of $J(F)_{\mathrm{tor}}$ of order prime to $p$, and let $J(F)[p^{\infty}]$ denote the subgroup of torsion points in $J(F)$ of $p$-power order. Consider the restrictions of this map to

\begin{align}
    J(F)_{\mathrm{tor}}^{(p)}\rightarrow \wi{J}(\mF_{w}), \label{eq:tor} \\
    J(F)[p^{\infty}]\rightarrow \wi{J}(\mF_{w}).\label{eq:p^{infty}}
\end{align}

Since $J(F)$ is an abelian variety with good reduction at $w$, \eqref{eq:tor} is injective by Theorem~C.1.4 in \cite{H-S2000}. Since $e_{p}=1<p-1$, \eqref{eq:p^{infty}} is injective by the appendix of \cite[see p.502 Lemma]{Katz1981}. This can be also checked as follows. 
Put $C=J(F)[p^{\infty}]$. Let $\mathcal{J}$ be the N\'eron model over $\mZ_p$ (\cite[p.16, Corollary 2]{Bosch-et-al}). Then, the scheme theoretic closure $\mathcal{C}$ of $C$ in $\mathcal{J}$ 
is finite flat over $\mZ_p$ by \cite[Section 2.1, p.259, line -1 to p.260, the first line]{Raynaud1974}. By \cite[p.268, COROLLAIRE 3.3.6.]{Raynaud1974}, $\mathcal{C}$ is determined by the constant group $C$. 
Thus, $\mathcal{C}$ is a constant finite flat group scheme over 
$\mZ_p$. Hence, it is finite \'etale. Then, the special fiber of the natural inclusion $\mathcal{C}\subset \mathcal{J}$ is the reduction map (\ref{eq:p^{infty}}) and it is injective.

Thus the composition  $J(F)_{\mathrm{tor}}=J(F)^{(p)}_{\mathrm{tor}}\times J(F)[p^{\infty}] \rightarrow J(F)\rightarrow \wi{J}(\mF_{w})$ is injective. By the commutative diagram, the composition 
\begin{equation}\label{eq:Tmor}
    T\rightarrow C(F)\rightarrow \wi{C}(\mF_{w})
\end{equation}
is injective. Since $\#T \ge \#\wi{C}(\mF_{w})$, \eqref{eq:Tmor} is bijective. 

We show that $C(\mQ)\cap J(\mQ)_{\mathrm{tor}}\subseteq T$. Let $P$ be an element of $C(\mQ)\cap J(\mQ)_{\mathrm{tor}}$. We consider $\iota(P)\in J(F)_{\mathrm{tor}}$ and $\wi{\iota(P)}\in \wi{J}(\mF_{w})$. Then $\wi{\iota}(\wi{P})=\wi{\iota(P)}$ by the commutative diagram. Since \eqref{eq:Tmor} is bijective, there exists a unique element $Q\in T$ such that $\wi{Q}=\wi{P}$. Then $\iota(Q)\in J(F)_{\mathrm{tor}}$ satisfies $\wi{\iota(Q)}=\wi{\iota}(\wi{Q})=\wi{\iota}(\wi{P})=\wi{\iota(P)}$, and hence $\iota(Q)=\iota(P)$, since the composition $J(F)_{\mathrm{tor}}\rightarrow J(F)\rightarrow \wi{J}(\mF_{w})$ is injective. Therefore $P=Q$, since $\iota\colon C(F)\rightarrow J(F)$ is injective. We conclude that $C(\mQ)\cap J(\mQ)_{\mathrm{tor}}\subseteq T$.

\end{proof}

\section{Examples}
In this section, by using Theorem~\ref{MainThm}, we present examples of projective curves for which the rational points can be determined explicitly. We begin with a brief review of hyperelliptic curves. Let $K$ be a number field and $\ov{K}$ be its algebraic closure.

\begin{definition}
    A $\mathit{hyperelliptic\ curve}$ $C/K$ is a smooth projective curve of genus $g\geq 1$ such that an affine model of $C$ can be written as
    \begin{equation*}
        y^2=f(x),f(x)\in K[x]
    \end{equation*}
    with $2g+1\leq \deg (f)\leq 2g+2$.
\end{definition}

We now describe the explicit construction by gluing. Suppose that $\deg f(x)=2g+1$ and
\begin{equation*}
    f(x)=a_{0}x^{2g+1}+a_{1}x^{2g}+\dots +a_{2g+1} \in K[x],\ (a_{0}\neq 0).
\end{equation*}
Then the hyperelliptic curve $C$ is obtained by gluing the affine curves
\begin{equation*}
    C_{0}:y^2=a_{0}x^{2g+1}+a_{1}x^{2g}+\dots +a_{2g+1}
\end{equation*}
and
\begin{equation*}
    C_{1}:w^2=a_{0}v+a_{1}v^2+\dots +a_{2g+1}v^{2g+2}
\end{equation*}
via the change of variables $(v,w)=(x^{-1},yx^{-(g+1)})$. We denote by $P_{\infty}$ the point $(0,0)$ on $C_{1}$. In particular, we have
\begin{equation*}
    C(K)=C_{0}(K)\cup\{P_{\infty}\}.
\end{equation*}
Next, suppose $\deg f(x)=2g+2$ and
\begin{equation*}
    f(x)=a_{0}x^{2g+2}+a_{1}x^{2g+1}+\dots +a_{2g+2},\  (a_{0}\neq 0).
\end{equation*}
In this case, the hyperelliptic curve $D$ over $K$ is obtained by gluing the affine curves
\begin{equation*}
    D_{0}:y^2=a_{0}x^{2g+2}+a_{1}x^{2g+1}+\dots +a_{2g+2}
\end{equation*}
and
\begin{equation*}
    D_{1}:w^2=a_{0}+a_{1}v+\dots +a_{2g+2}v^{2g+2}
\end{equation*}
via the change of variables $(v,w)=(x^{-1},yx^{-(g+1)})$. We denote the points on $D_{1}$ by
\begin{equation*}
    P_{+\infty}=(v,w)=(0,\sqrt{a_{0}}),\ P_{-\infty}=(v,w)=(0,-\sqrt{a_{0}})
\end{equation*}
where $\sqrt{a_{0}}$ is a square root of $a_{0}$ in $\ov{K}$. If $\sqrt{a_{0}}\in K$, we have
\begin{equation*}
    D(K)=D_{0}(K)\cup\{ P_{\pm\infty} \}.
\end{equation*}

\begin{definition}
    Let $\tau$ denote the \textit{hyperelliptic involution} $\tau\colon (x,y)\mapsto (x,-y)$. The \textit{Weierstrass points} $P_{1},\dots ,P_{2g+2}$ are the $\ov{K}$-rational fixed points of $\tau$. A point that is not a Weierstrass point is called a \textit{non-Weierstrass point}.
\end{definition}

\begin{remark}
    If $\deg f(x)=2g+1$, then $P_{\infty}$ is a Weierstrass point. If $\deg f(x)=2g+2$, then $P_{\pm\infty}$ are non-Weierstrass points.
\end{remark}

\begin{remark}
    Let $C\colon y^2=f(x)$ be a hyperelliptic curve over a number field $K$, and let $J$ be the Jacobian of $C$. If $\deg f(x)=2g+1$, then the images of the Weierstrass points under $\iota$ belong to $J(\ov{K})[2]$. If $\deg f(x)=2g+2$, then the images of the Weierstrass points under $\iota$ belong to $J(\ov{K})[2]$, and $\iota(P_{\pm\infty})\in J(\ov{K})[2]$. These results follow from Proposition~2.1 of \cite{Sairaiji2000}.
\end{remark}

\subsection{The hyperelliptic curve $y^2=x^5+d$ }
Let $C_{d}$ be the hyperelliptic curve $y^2=x^5+d$, where $d \in \mZ \setminus \{0\}$ is tenth-power free. Fix a prime $p=11$. We have
\begin{equation*}
    \#\wi{C}(\mF_{11})=
    \begin{cases}
        1 & \text{if $\ov{d}=\ov{7}$}, \\
        3 & \text{if $\ov{d}=\ov{9}$}, \\
        8 & \text{if $\ov{d}=\ov{1}$}, \\
        \geq 11 & \text{otherwise}.
    \end{cases}
\end{equation*}

We consider each case in turn.

\subsubsection{The case $\ov{d}=\ov{7}$}
In this case, $F=\mQ$, $T=\{P_{\infty}\}$, and $w=11$ clearly form a TPE for $C_{d}(\mQ)$, and hence
\begin{equation*}
    C_{d}(\mQ)\cap J(\mQ)_{\mathrm{tor}}=\{P_{\infty}\}.
\end{equation*}
In particular, if $\operatorname{rank}J(\mQ)=0$, then
\begin{equation*}
    C_{d}(\mQ)=\{P_{\infty}\}.
\end{equation*}

\subsubsection{The case $\ov{d}=\ov{9}$}
If $d$ is a perfect square, then $P_{\pm\sqrt{d}}\coloneqq(0,\pm\sqrt{d})\in C(\mQ)$ and $\operatorname{div}(y\mp\sqrt{d})=5(P_{\pm\sqrt{d}}-P_{\infty})$. Thus, we can take $F=\mQ,\ T=\{P_{\pm\sqrt{d}},P_{\infty}\},\ w=11$ to make up a TPE for $C_{d}(\mQ)$, and we obtain $C_{d}(\mQ)=\{ P_{\pm\sqrt{d}},P_{\infty} \}$. If $d$ is not a perfect square, let $F=\mQ(\sqrt{d})$, let $T=\{ P_{\pm\sqrt{d}}, P_{\infty}\}$, and let $w$ be a prime of $F$ lying above 11. Then these constitute a TPE for $C_{d}(\mQ)$. Indeed, since $\ov{d}=\ov{9}=\ov{3}^2$ in $\mF_{11}$, the prime 11 splits completely in $F$. Moreover, $\iota(T)\subseteq J(F)[5]$ and $\#T=\#\wi{C_{d}}(\mF_{11})=3$. Therefore, $C_{d}(\mQ)\subseteq T$; in particular,
\begin{equation*}
    C_{d}(\mQ)=\{P_{\infty}\}.
\end{equation*}

\subsubsection{The case $\ov{d}=\ov{1}$}
In this case, $\#\wi{C}_{d}(\mF_{11})=8$. Let
\begin{equation*}
    F=\mQ(\zeta_{5},\sqrt{d},\sqrt[5]{d}),
\end{equation*}
where $\zeta_{5}$ denotes a primitive fifth root of unity. Since $\ov{d}=\ov{1}$ in $\mF_{11}$, the prime $11$ splits completely in $F$. Choose a prime $w$ of $F$ lying above $11$, and define
\begin{equation*}
    T=\{ P_{\infty},P_{\pm\sqrt{d}}=(0,\pm\sqrt{d}),(-\zeta_{5}^{i}\sqrt[5]{d},0)\ \mid\ i=0,\dots,4 \}.
\end{equation*}
Then $\#T=8$. Hence $(F,T,w)$ is a TPE for $C_{d}(\mQ)$. By Theorem~\ref{MainThm}, we obtain
\begin{equation*}
    C_{d}(\mQ)\cap J(\mQ)_{\mathrm{tor}}\subseteq T.
\end{equation*}
In particular, if $\operatorname{rank}J(\mQ)=0$, then
\begin{equation*}
    C_{d}(\mQ)=
            \begin{cases}
                \{ (0,\pm\sqrt{d}),P_{\infty} \} & \text{if $d$ is a perfect square but not a fifth power}, \\
                \{(-\sqrt[5]{d},0),P_{\infty}\} & \text{if $d$ is not a perfect square but is a fifth power}, \\
                \{ P_{\infty} \} & \text{if $d$ is neither a perfect square nor a fifth power}.
            \end{cases}
\end{equation*}

\subsubsection{The remaining case}
In this case, the result \cite[see p.206-207]{Coleman1986} shows that 
\begin{equation*}
    \#(C_{d}(\ov{\mQ})\cap J(\ov{\mQ})_{\mathrm{tor}})=10.
\end{equation*}
On the other hand, if a TPE $(F,T,w)$ for $C_{d}(\mQ)$ existed, then $T\subseteq C_{d}(F)\cap J(F)_{\mathrm{tor}}$ have to satisfy
\begin{equation*}
    \#T=\#\wi{C}_{d}(\mF_{w})\geq 11\ (\ov{d}\neq \ov{1},\ov{7},\ov{9}).
\end{equation*}
However, $T\subseteq C_{d}(\ov{\mQ})\cap J(\ov{\mQ})_{\mathrm{tor}}$, thus we can not apply Theorem \ref{MainThm} on this case.

\subsection{The hyperelliptic curve $y^2=x^{p-1}+dx^{\frac{p-1}{2}}-1$}
Let $p$ be a prime. For $d\in p\mZ$, let $D_{d}$ be the hyperelliptic curve defined by
\begin{equation}
    y^2=f(x)\coloneqq x^{p-1}+dx^{\frac{p-1}{2}}-1.
\end{equation}
Its genus is $\frac{p-3}{2}$. The set $D_{d}(\mQ)$ contains the rational points $P_{\pm}$. We set $r_{d}\coloneqq \operatorname{rank}J_{D_{d}}(\mQ)$.
A computation of the discriminant of $f(x)$ yields
\begin{equation*}
    \left(\frac{p-1}{2}\right)^{p-1}\left(4+d^2\right)^{\frac{p-1}{2}}.
\end{equation*}
Since $p\mid d$, we have
\begin{equation*}
    \left(\frac{p-1}{2}\right)^{p-1}(4+d^2)^{\frac{p-1}{2}}\equiv 1 \not\equiv 0\ \pmod{p}
\end{equation*}
and thus, $D_{d}$ has good reduction at $p$.
Let $F$ be the splitting field of $f(x)$. Then
\begin{equation*}
    f(x)\equiv x^{p-1}-1\ \pmod{p},
\end{equation*}
which factors over $\mF_{p}$ as a product of linear polynomials. Therefore, $p$ splits completely in $F$. Choose a place of $F$ lying above $p$, and denote it by $w$. Define
\begin{equation*}
    T=\{ (\alpha,0)\in F\times F\ \mid\ f(\alpha)=0 \}\cup\{P_{\pm}\}.
\end{equation*}
Then $\#T=\#\wi{D_{d}}(\mF_{p})=p+1$. Hence, $(F,T,w)$ is a TPE for $D_{d}(\mQ)$, and we obtain
\begin{equation*}
    D_{d}(\mQ)\cap J(\mQ)_{\mathrm{tor}}\subseteq T.
\end{equation*}

\subsection{The hyperelliptic curve $y^2=x^p-x$ }
Let $p$ be a prime with $p\geq 5$. Let $C$ be the hyperelliptic curve $y^2=x^p-x$. Let $F$, $T$, and $w$ be as follows:
\begin{align*}
    &F=\mQ(\zeta_{p-1}), \\
    &T=\{(0,0)\}\cup\{ (\zeta_{p-1}^i,0) \mid i=1,\dots,p-1 \}\cup\{P_{\infty}\}, \\
    &w:\text{a place of $F$ lying above $p$}.
\end{align*}
Then $(F,T,w)$ is a TPE for $C(\mQ)$. We now verify this. 

Clearly, \eqref{eq:TPEa} is satisfied. Since $p\ge 5$, it is an odd prime. Moreover, since $p\equiv 1\ \pmod{p-1}$, the prime $p$ splits completely in $F$. Hence, condition \eqref{eq:TPEb} is also satisfied. Since $x^p-x$ factors over $\mFp$ as a product of $p$ distinct linear factors, the curve $C$ has good reduction at $p$. In particular, condition \eqref{eq:TPEc} is satisfied. 

Since every element of $T$ is a Weierstrass point of $C$, we have $\iota(T)\subseteq J(F)[2]$. Furthermore, as $\#T=\#\wi{C}(\mFp)=p+1$, conditions \eqref{eq:TPEd} and \eqref{eq:TPEe} are also satisfied. 

Therefore, $C(\mQ)\cap J(\mQ)_{\mathrm{tor}}\subseteq T$, and in particular,
\begin{equation*}
    C(\mQ)\cap J(\mQ)_{\mathrm{tor}}=\{ (0,0),(1,0),(-1,0),P_{\infty} \}.
\end{equation*}
If $\operatorname{rank}J(\mQ)=0$ (e.g., $p=5,13,17$), then
\begin{equation*}
    C(\mQ)=\{ (0,0),(1,0),(-1,0),P_{\infty} \}.
\end{equation*}

\subsection{The hyperelliptic curve $y^2=(x^2-1)(x^2-4)(x+3)$}
Let $C$ be the hyperelliptic curve $y^2=f(x)=(x^2-1)(x^2-4)(x+3)$. Let $F=\mQ(\sqrt{15})$ and take $p=7$. Since $7$ splits completely in $F$, choose a prime $w$ of $F$ lying above $7$. The curve $C$ has good reduction at $7$, and
\begin{equation*}
    \#\wi{C}(\mF_{7})=8.
\end{equation*}
Define
\begin{equation*}
    T=\{ (\pm 1,0),(\pm 2,0),(-3,0),P_{\infty} \}\cup\{ (3,\pm 4\sqrt{15}) \}.
\end{equation*}
Then $\#T=8$. Let $C'$ denote the twist of $C$ with respect to the extension $F/\mQ$. Then $C'$ is the hyperelliptic curve defined by $y^2=15f(x)$ which is the quadratic twist of $C$ by the quadratic extension $F/\mQ$. Let $\sigma$ be the generator of the Galois group of $F/\mQ$. It is easy to see that  $J_C(F)\otimes_\mZ\mQ\simeq (J_C(F)\otimes_\mZ\mQ)^{\sigma=+1}\oplus (J_C(F)\otimes_\mZ\mQ)^{\sigma=-1}$ as a $\mQ[{\rm Gal}(F/\mQ)]$-module. It follows that $(J_C(F)\otimes_\mZ\mQ)^{\sigma=+1}
=J_C(\mQ)\otimes_\mZ\mQ$ and 
$(J_C(F)\otimes_\mZ\mQ)^{\sigma=-1}
=J_{C'}(\mQ)\otimes_\mZ\mQ$. Thus, we have
\begin{equation*}
    \operatorname{rank}J_{C}(F)= \operatorname{rank}J_{C}(\mQ)+\operatorname{rank}J_{C'}(\mQ).
\end{equation*}
Computing the right-hand side using Magma, we find that $\operatorname{rank}J_{C}(F)=0$. Hence $J_{C}(F)=J_{C}(F)_{\mathrm{tor}}$, and therefore $\iota(T)\subseteq J_{C}(F)_{\mathrm{tor}}$. Thus $(F,T,w)$ is a TPE for $C(\mQ)$.
Consequently, by Theorem~\ref{MainThm} we obtain $C(\mQ)\subseteq T$, and hence
\begin{equation*}
    C(\mQ)=\{ (\pm 1,0),(\pm 2,0),(-3,0),P_{\infty} \}.
\end{equation*}

\end{document}